%%%%%%%%%%%%%%%%%%%% ProceedingsNewcastle2012.tex %%%%%%%%%%%%%%%%%%%%%%%%%%%%%%%%%%%
% Fichier créé le 26/03/2012 à Nancy
% titre Families of cubic Thue equations with effective bounds for the solutions 
%
\def\misajour{March 19, 2013}
% ancienne mise à jour{ 28/08/2012}
%%%%%%%%%%%%%%%% Springer %%%%%%%%%%%%%%%%%%%%%%%%%%%%%%%%%%
% Number Theory and Related Fields, Springer Verlag

% correction du 24/07= remplacé xyn\not=0 par xy\not=0 and $n\not=-1$ dans l'exemple

% RECOMMENDED %%%%%%%%%%%%%%%%%%%%%%%%%%%%%%%%%%%%%%%%%%%%%%%%%%%
\documentclass[graybox]{svmult}

% choose options for [] as required from the list
% in the Reference Guide

% \usepackage{amsmath}

\usepackage{mathptmx} % selects Times Roman as basic font
\usepackage{helvet} % selects Helvetica as sans-serif font
\usepackage{courier} % selects Courier as typewriter font
\usepackage{type1cm} % activate if the above 3 fonts are
 % not available on your system
%
\usepackage{makeidx} % allows index generation
\usepackage{graphicx} % standard LaTeX graphics tool
 % when including figure files
\usepackage{multicol} % used for the two-column index
\usepackage[bottom]{footmisc}% places footnotes at page bottom

\usepackage[applemac]{inputenc}

% see the list of further useful packages
% in the Reference Guide

\makeindex % used for the subject index
 % please use the style svind.ist with
 % your makeindex program

\def\C{\mathbf{C}}
\def\N{\mathbf{N}}
\def\Q{\mathbf{Q}}
\def\R{\mathbf{R}}
\def\Z{\mathbf{Z}}
\def\ZK{\Z_K}

\def\mubar{\overline{\mu}}
\def\rmh{{\mathrm{h}}}
\def\Ftilde{\tilde{F}}
\def\rmN{{\mathrm N}}

\def\house#1{\setbox1=\hbox{$\,#1\,$}%
\dimen1=\ht1 \advance\dimen1 by 2pt \dimen2=\dp1 \advance\dimen2 by 2pt
\setbox1=\hbox{\vrule height\dimen1 depth\dimen2\box1\vrule}%
\setbox1=\vbox{\hrule\box1}%
\advance\dimen1 by .4pt \ht1=\dimen1
\advance\dimen2 by .4pt \dp1=\dimen2 \box1\relax}

\def\sigmabar{\overline{\sigma}}
\def\epsilonprime{\epsilon'}
\def\xiprime{\xi'}
\def\epsilonprimebar{{\overline{\epsilonprime}}}

\def\xiprimeunbar{{\overline{\xiprime_1}}}
\def\alphaprime{\alpha'}
\def\alphaprimebar{{\overline{\alphaprime}}}
\def\alphatilde{\tilde{\alpha}}
\def\alphaprimetilde{\tilde{\alpha}'}
\def\alphaprimebartilde{\overline{\tilde{\alpha}'}}

\newtheorem{corollaire}{Corollary}
\newtheorem{lemme}{Lemma}

 %%%% Indexation des constantes kappa
\newcounter{compteurkappa}

\def\Newcst#1{
\refstepcounter{compteurkappa}
\kappa_{
\arabic{compteurkappa}}
\label{#1}
}

\def\cst#1{\kappa_{\ref{#1}}}
%%%%%%

%%% Usage: quand on définit les constantes, on utilise Newcst{kappa1}, \Newcst{kappa2}, \dots
%%% quand on y renvoit, on utilise \cst{kappa1}, \cst{kappa2}, \dots

%%%%%%%%%%%%%%%%%%%%%%%%%%%%%%%%%%%%%%%%%%%%%%%%%%%%%%%%%%%%%%%%%%%%%%%%%%%%%%%%%%%%%%%%%

\begin{document}

\title*{
Families of cubic Thue equations 
\\
with effective bounds for the solutions 
}
% Use \titlerunning{Short Title} for an abbreviated version of
% your contribution title if the original one is too long
\author{Claude LEVESQUE and Michel WALDSCHMIDT}
% Use \authorrunning{Short Title} for an abbreviated version of
% your contribution title if the original one is too long
\institute{Claude LEVESQUE \at
D\'{e}partement de math\'{e}matiques et de statistique,
 Universit\'{e} Laval,
Qu\'{e}bec (Qu\'{e}bec),
CANADA G1V 0A6
\\
 \email{Claude.Levesque@mat.ulaval.ca}
\and Michel WALDSCHMIDT\at
Institut de Math\'{e}matiques de Jussieu,
Universit\'{e} Pierre et Marie Curie (Paris 6),
4 Place Jussieu,
F -- 75252 PARIS Cedex 05, FRANCE
\\
 \email{miw@math.jussieu.fr}
\\
\null \hfill \it Mise à jour: \bf \misajour}
%
% Use the package "url.sty" to avoid
% problems with special characters
% used in your e-mail or web address
%
\maketitle

\abstract{. To each non totally real cubic extension $K$ of $\Q$
 and to each generator $\alpha$ of the cubic field $K$,
 we attach a family of cubic Thue equations,
 indexed by the units of $K$, and we prove that this
 family of cubic Thue equations has only
 a finite number of integer solutions, by
 giving an effective upper bound for these solutions.}

\section{Statements}
\label{S:Enonces} Let us consider an irreductible binary cubic form
having rational integers coefficients
$$
F(X,Y)\;=\;a_0X^3+a_1X^2Y+a_2XY^2+a_3Y^3\in\Z[X,Y]
$$
with the property that the polynomial $F(X,1)$ has exactly one
real root $\alpha$ and two
 complex imaginary roots, namely $\alphaprime$ and
 $\alphaprimebar$.
 Hence $\alpha\not\in\Q$, $\alphaprime\; \not=\; \alphaprimebar$ and
$$
F(X,Y)\;=\;a_0(X-\alpha Y)(X-\alphaprime Y)(X-\alphaprimebar Y).
$$
Let $K$ be the cubic number field $\Q(\alpha)$ which we view as a
subfield of $\R$. Define $\sigma:K\rightarrow\C$ to be one of the
two complex embeddings, the other one being the conjugate
$\sigmabar$. Hence $\alphaprime\;=\;\sigma(\alpha)$ and
$\alphaprimebar\;=\;\sigmabar(\alpha)$. If $\tau$ is defined to
 be the complex conjugation, we have
 $\sigmabar\;=\;\tau\circ\sigma$ and $\sigma\circ\tau\;=\;\sigma$.

 Let $\epsilon$ be a %the fundamental 
 unit $>1$ of the ring
$\ZK$ of algebraic integers of $K$ and let
$\epsilonprime\;=\;\sigma(\epsilon)$ and
$\epsilonprimebar\;=\;\sigmabar(\epsilon)$ be the two other algebraic
conjugates of $\epsilon$. We have
$$
|\epsilonprime|\;=\;|\epsilonprimebar|\;=\;\frac{1}{\sqrt{\epsilon}}<1.
$$
For $n\in\Z$, define
$$
F_n(X,Y)\;=\;a_0\bigl(X- \epsilon^n\alpha Y\bigr) \bigl(X-
\epsilonprime^n\alphaprime Y\bigr) \bigl(X- \epsilonprimebar^n\,
\alphaprimebar\, Y\bigr).
$$
Let $k\in\N$, where $\N\;=\;\{1,2,\dots\}$. We plan to study the
family of Thue inequations
\begin{equation}\label{Eq:InegaliteThue}
0\;<\;|F_n(x,y)|\;\le\; k,
\end{equation}
where the unknowns $n,x,y$ take values in $\Z $.

\begin{theorem}\label{Theoreme:principal} 
There exist effectively
computable positive constants $\Newcst{kappa1}$ and
$\Newcst{kappa2}$, depending only on $F$,
 such that, for all $k\in\Z$ with $k\ge 1$ and for all
 $(n,x,y)\in\Z\times \Z\times\Z$
 satisfying $\epsilon^n \alpha\not\in \Q$, $xy\;\not=\;0$
and $|F_n(x,y)|\le k$, we have
$$
\max\left\{ \epsilon^{|n|}, \; |x|, \; |y|\right\}\;\le\;
\cst{kappa1} k^{\cst{kappa2}}.
$$
\end{theorem}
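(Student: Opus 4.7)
The plan is to combine a Siegel-type identity for the three linear factors of $F_n$ with an effective Baker lower bound on linear forms in logarithms of algebraic numbers. Writing $\beta_n^{(0)} = \epsilon^n\alpha$, $\beta_n^{(1)} = \epsilonprime^n\alphaprime$, $\beta_n^{(2)} = \epsilonprimebar^n\alphaprimebar$ and $\Lambda^{(j)} = x - \beta_n^{(j)} y$, one has $|F_n(x,y)| = |a_0|\,|\Lambda^{(0)}|\,|\Lambda^{(1)}|^2$ (using $\Lambda^{(2)}=\overline{\Lambda^{(1)}}$), so the hypothesis reads $|\Lambda^{(0)}|\,|\Lambda^{(1)}|^2 \le k/|a_0|$. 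Because $|\beta_n^{(0)}|$ grows like $\epsilon^{|n|}$ while $|\beta_n^{(1)}|$ decays like $\epsilon^{-|n|/2}$ (and the roles reverse when $n<0$), at least one of the $|\Lambda^{(j)}|$ must be much smaller than the others; a case analysis, the cases being related by complex conjugation and by replacing $\epsilon$ by $\epsilon^{-1}$, reduces matters to the generic subcase $n\ge 0$ with $\Lambda^{(0)}$ the minimal factor.

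Routine estimates in this subcase, using $x/y\approx\beta_n^{(0)}=\epsilon^n\alpha$ and $|\Lambda^{(1)}|^2 \le k/(|a_0|\,|\Lambda^{(0)}|)$, give rough a priori bounds of the shape $\max(|x|,|y|)\le C_1\,\epsilon^{|n|}\,k^{C_2}$ with effective constants $C_1,C_2$ depending only on $F$. It therefore suffices to prove $|n|\le C_3\log k$, whereupon $\cst{kappa1}$ and $\cst{kappa2}$ can be read off.

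To bound $|n|$, I invoke the Siegel identity
\[
(\beta_n^{(1)}-\beta_n^{(2)})\Lambda^{(0)} + (\beta_n^{(2)}-\beta_n^{(0)})\Lambda^{(1)} + (\beta_n^{(0)}-\beta_n^{(1)})\Lambda^{(2)} = 0,
\]
which after dividing by $(\beta_n^{(0)}-\beta_n^{(1)})\Lambda^{(2)}$ produces the key quantity
\[
\Xi_n \;=\; \frac{(\beta_n^{(0)}-\beta_n^{(2)})\Lambda^{(1)}}{(\beta_n^{(0)}-\beta_n^{(1)})\Lambda^{(2)}} \;=\; 1 - \frac{(\beta_n^{(1)}-\beta_n^{(2)})\Lambda^{(0)}}{(\beta_n^{(0)}-\beta_n^{(1)})\Lambda^{(2)}}.
\]
With the previous step's estimates, the second term on the right has modulus at most $C_4 k\epsilon^{-c|n|}$ for a positive constant $c$ depending on $F$, so $|\Xi_n-1|$ is exponentially small in $|n|$. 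Since $\beta_n^{(2)}=\overline{\beta_n^{(1)}}$ and $\Lambda^{(2)}=\overline{\Lambda^{(1)}}$, the number $\Xi_n$ lies on the unit circle, and its principal logarithm is purely imaginary of the same order of magnitude. Unfolding the $\beta_n^{(j)}$ via their definitions, $\log \Xi_n$ can then be written as a $\Z$-linear combination of logarithms of a fixed finite list of nonzero algebraic numbers---the differences $\alpha-\alphaprime$, $\alpha-\alphaprimebar$, $\alphaprime-\alphaprimebar$, the conjugates $\epsilon,\epsilonprime,\epsilonprimebar$ of the unit, and the complex-conjugate ratio $\Lambda^{(1)}/\Lambda^{(2)}$---with one integer coefficient equal to $\pm n$ and the remaining coefficients bounded by $O(\log\max(|x|,|y|))$.

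An effective Baker--Feldman lower bound then yields $|\log \Xi_n|\ge C_5\,B^{-C_6}$ with $B=O(|n|+\log\max(|x|,|y|))$; comparing with the exponential upper bound forces $|n|\le C_3\log k$, and together with the rough estimate $\log\max(|x|,|y|)\le |n|\log\epsilon + O(\log k)$ this completes the proof. The main technical obstacle is unfolding $\log \Xi_n$ as a genuine $\Z$-linear combination of logarithms of a \emph{fixed}, $n,x,y$-independent set of algebraic numbers: the ``wandering'' factors $x-\beta_n^{(1)}y$ and $x-\beta_n^{(2)}y$ must be handled via the rank-one Dirichlet structure of $\ZK^\times$ so that the bulk of their $n$-dependence is absorbed into an integer coefficient of $\log\epsilon$, after which one must verify the nonvanishing of the resulting linear form---a multiplicative independence statement in $K^\times$ tied to the irreducibility of $F$.
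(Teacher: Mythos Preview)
Your overall plan---Siegel identity plus an effective Baker lower bound---is the same as the paper's, but two of your reductions do not go through as stated, and without them the argument does not close.

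\medskip
\textbf{The symmetry reduction is false.} You claim that complex conjugation and $\epsilon\mapsto\epsilon^{-1}$ reduce everything to the subcase ``$n\ge 0$ and $\Lambda^{(0)}$ minimal''. Complex conjugation only exchanges $\Lambda^{(1)}$ and $\Lambda^{(2)}$ (which already have equal modulus), and $\epsilon\mapsto\epsilon^{-1}$ only flips the sign of $n$; neither operation exchanges the real factor $\Lambda^{(0)}$ with the complex ones. The regime $|\Lambda^{(0)}|\ge |\Lambda^{(1)}|=|\Lambda^{(2)}|$ with $n>0$ genuinely occurs (in the paper's language it is the case $\ell>0$), and in that regime your key inequality $|\Xi_n-1|\le C_4 k\,\epsilon^{-c|n|}$ simply fails: the numerator $|\beta_n^{(1)}-\beta_n^{(2)}|\,|\Lambda^{(0)}|$ is then of the same order as the denominator, and $\Xi_n$ sits near $-1$ rather than near $1$. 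The paper handles this regime separately, using the extra information $|x|\ge 1$ to obtain $\tfrac32 n\le \ell+O(\log k)$ before any transcendence input.

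\medskip
\textbf{The linear form is not of the shape you describe.} You assert that $\log\Xi_n$ is a $\Z$-linear combination of logarithms of a \emph{fixed} list including $\alpha-\alphaprime$, $\alpha-\alphaprimebar$, $\alphaprime-\alphaprimebar$. But $\beta_n^{(0)}-\beta_n^{(1)}=\epsilon^n\alpha-\epsilonprime^n\alphaprime$ does \emph{not} factor as $(\alpha-\alphaprime)$ times a unit; it is an indecomposable algebraic number of height $\asymp n$. Likewise, invoking ``the rank-one Dirichlet structure'' to tame $\Lambda^{(1)}/\Lambda^{(2)}$ is exactly the step of writing $\Lambda^{(0)}=\epsilon^{\ell}\xi_1$, which introduces a \emph{second} unknown exponent $\ell$. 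Once you do this honestly, the Baker input has a logarithm of height $\asymp n+\log k$ with coefficient $1$, and the lower bound is only $\exp\bigl(-C(n+\log k)\log B\bigr)$; comparing with $\epsilon^{-cn}$ gives $n\le C(n+\log k)\log B$, which yields nothing by itself. The paper closes this loop by first using a \emph{different} Baker estimate---the lower bound $|\sin(\delta+n\theta)|\ge (|n|+2)^{-c}$ with fixed heights---together with the case analysis on which two of the three purely imaginary terms $T_1,T_2,T_3$ dominate, to obtain $n\le C(\log k+\log|\ell|)$ \emph{before} the main Baker step; only then does the comparison force $|\ell|\le C\log k$ and hence $n\le C\log k$.

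\medskip
In short, the missing ingredient is the two-parameter $(n,\ell)$ analysis: you need the auxiliary sine lower bound and the trichotomy on the dominant pair in the unit equation to bound $n$ in terms of $\ell$ first, and you need to treat the case where the real linear factor is the \emph{largest} rather than the smallest.
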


 From this theorem, we deduce the following corollary.

\begin{corollaire}.
\label{Corollaire:finitude} For $k\in\Z$, $k>0$, the set
$$
\bigl\{(n,x,y)\in\Z\times \Z\times\Z\; \mid \; \epsilon^n
\alpha\not\in \Q\; ;
 \; xy\not=0\; ;\; |F_n(x,y)|\le k\bigr\}
$$
is finite.
\end{corollaire}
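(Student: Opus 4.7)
The corollary is a direct consequence of Theorem \ref{Theoreme:principal}, and my plan is simply to unpack this deduction. Given a positive integer $k$ (so $k \geq 1$, matching the hypothesis of the theorem), every triple $(n,x,y) \in \Z\times\Z\times\Z$ with $\epsilon^n\alpha \not\in \Q$, $xy\neq 0$, and $|F_n(x,y)|\leq k$ satisfies
$$
\max\bigl\{\epsilon^{|n|},\; |x|,\; |y|\bigr\} \;\leq\; \cst{kappa1}\, k^{\cst{kappa2}}.
$$

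From this single bound I can read off bounds on each of the three coordinates. The inequalities $|x|\leq \cst{kappa1} k^{\cst{kappa2}}$ and $|y|\leq \cst{kappa1} k^{\cst{kappa2}}$ are immediate. For $n$, I use that $\epsilon>1$ (by the choice of $\epsilon$ as a unit greater than $1$), so taking logarithms converts $\epsilon^{|n|}\leq \cst{kappa1} k^{\cst{kappa2}}$ into
$$
|n| \;\leq\; \frac{\log\bigl(\cst{kappa1}\, k^{\cst{kappa2}}\bigr)}{\log \epsilon}.
$$
Hence $(n,x,y)$ is constrained to a bounded box in $\Z^3$.

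There is no genuine obstacle at this stage: a bounded subset of the lattice $\Z\times\Z\times\Z$ automatically contains only finitely many points, which is exactly the finiteness asserted by the corollary. All the mathematical substance is already concentrated in Theorem \ref{Theoreme:principal}; the corollary is merely the qualitative finiteness statement obtained after discarding the explicit values of the constants $\cst{kappa1}$ and $\cst{kappa2}$.
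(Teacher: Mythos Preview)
Your proposal is correct and matches the paper's approach: the paper simply states that the corollary is deduced from Theorem~\ref{Theoreme:principal} without spelling out the details, and you have accurately supplied the routine step of converting the uniform bound on $\max\{\epsilon^{|n|},|x|,|y|\}$ into finiteness of the lattice points.
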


 This corollary is a particular case of the main result of
\cite{LW1}, but the proof in \cite{LW1} is based on the
 Schmidt subspace theorem which does not allow to give an effective
 upper bound for the solutions $(n,x,y)$.
 \\
 
\noindent
{\bf Example.} 
Let $D\in\Z$, $D\not=-1$. Let $\epsilon:\;=\;
\bigl(\root 3 \of {D^3+1} -D\bigr)^{-1}$.
There exist two positive effectively computable absolute constants $\Newcst{kappaExample1}$ and $\Newcst{kappaExample2}$ with the following property. 
Define a sequence $(F_n)_{n\in\Z}$ of cubic forms in $\Z[X,Y]$ by 
$$
F_n(X,Y)\;=\;X^3+a_nX^2Y+b_nXY^2-Y^3,
$$
where $(a_n)_{n\in\Z}$ is defined by the recurrence relation
$$
a_{n+3}\;=\;  3Da_{n+2}+3D^2a_{n+1}+a_n
$$
with the initial conditions $a_0\;=\; 3D^2$, $a_{-1}\;=\; 3$ and $a_{-2}\;=\; -3 D$, and where $(b_n)_{n\in\Z}$ is defined by $b_n\;=\;-a_{-n-2}$.
Then, for $x$, $y$, $n$ rational integers with $xy\not=0$ and $n\not=-1$, we have 
$$
|F_n(x,y)|\ge \cst{kappaExample1} \max\{|x|, \; |y|, \; \epsilon^{|n|}\}^{\cst{kappaExample2}}.
$$

\medskip
This result follows from Theorem 
$\ref{Theoreme:principal}$ with $\alpha=\epsilon$ and 
$$
F(X,Y) \;=\;X^3-3DX^2Y-3D^2XY^2-Y^3.
$$ 
Indeed, the irreducible polynomial of $\epsilon^{-1}=\root 3 \of {D^3+1} -D$ is
$$ 
F_{-2}(X,1)\;=\;(X+D)^3-D^3-1 \;=\; X^3+3DX^2+3D^2X-1,
$$
the irreducible polynomial of $\alpha\; =\; \epsilon$ is 
$$
F(X,1)\;=\; F_0(X,1)\;=\; F_{-2}(1,X) \;=\; X^3-3D^2X^2-3DX-1, 
$$
while
$$
 F_{-1}(X,Y)\;=\;(X-Y)^3\;=\;X^3-3X^2Y+3XY^2-Y^3.
$$
For $n\in\Z$, $n\not=-1$,
 $F_n(X,1)$ is the irreducible polynomial of $\alpha\epsilon^n \;=\; \epsilon^{n+1}$, 
while for any $n\in\Z$, $F_n(X,Y)\;=\;\rmN_{\Q(\epsilon)/\Q}(X- \epsilon^{n+1}Y)$.
The recurrence relation for 
$$
a_n\; \; =\epsilon^{n+1}+{\epsilonprime}^{n+1}+{\epsilonprimebar}^{n+1}
$$
 follows from 
$$
\epsilon^{n+3}\;=\;  3D\epsilon^{n+2}+3D^2\epsilon^{n+1}+\epsilon^n
$$
and for $b_n$, from 
$F_{-n}(X,Y)\;=\; - F_{n-2}(Y,X)$. 
\bigskip\bigskip

%-:-:-:-:-:-:-:-:-:-:-:-:-:-:-:-:-:-:-:-:-:-:-:-:-:-:-:-:-:-:-:-:-:-:-:-:-:-:-:-:-:-:-:-:-:-:-:-:-:-:-:-:-:-:-:-:-:-:-:-:-:-:-:-:-:-:
\section{Elementary estimates }\label{S:EstimationsElementaires}

For a given integer $k>0$, we consider a solution $(n,x,y)$ in
$\Z^3$ of the Thue inequation ($\ref{Eq:InegaliteThue}$) with
$\epsilon^n\alpha$ irrational and $xy\not=0$. We will use
$\kappa_5,\kappa_6,\dots,\cst{kappa48}$ to designate some constants
depending only on $\alpha$.

Let us firstly explain that in order to prove Theorem 
$\ref{Theoreme:principal}$, we can assume $n\ge 0$ by eventually
permuting $x$ and $y$. Let us suppose that $n<0$ and write
$$
F(X,Y)\;=\;a_3(Y-\alpha^{-1} X)(Y-\alphaprime^{-1}
X)(Y-\alphaprimebar^{-1} X).
$$
Then
$$
F_n(X,Y)\;=\;a_3\bigl(Y- \epsilon^{|n|}\alpha^{-1} X\bigr)
 \bigl(Y- \epsilonprime^{|n|}\alphaprime^{-1} X\bigr)
 \bigl(Y- \epsilonprimebar^{|n|}\, \alphaprimebar^{-1} \, X\bigr).
$$
Now it is simply a matter of using the result for $|n|$ for the
polynomial $G(X,Y)\;=\;F(Y,X)$.

Let us now check that, in order to prove the statements of
\S\ref{S:Enonces}, there is no restriction in assuming that
$\alpha$ is an algebraic integer and that $a_0\;=\;1$. To achieve
this goal, we define
$$
\Ftilde(T,Y)\;=\; T^3+a_1T^2Y+a_0a_2TY^2+a_0^2a_3Y^3\in\Z[T,Y],
$$
so that $a_0^2F(X,Y)\;=\;\Ftilde(a_0X,Y)$. If we define\,
$\alphatilde\;=\;a_0\alpha$ \,and \,
$\alphaprimetilde\;=\;a_0\alphaprime$,\, then $\alphatilde$ is a nonzero
algebraic integer, and we have
$$
\Ftilde(T,Y)\;=\;(T-\alphatilde Y)(T-\alphaprimetilde
Y)(T-\alphaprimebartilde \, Y).
$$
For $n\in\Z$, the binary form
$$
\Ftilde_n(T,Y)\;=\;(T-\epsilon^n \alphatilde Y)(T-\epsilonprime^n\,
\alphaprimetilde Y) (T-\epsilonprimebar^n \, \alphaprimebartilde\,
Y)
$$
satisfies
$$
a_0^2F_n(X,Y)\;=\;\Ftilde_n(a_0X,Y).
$$
The condition ($\ref{Eq:InegaliteThue}$) implies\,
$0<|\Ftilde_n(a_0x,y)|\le a_0^2 k$.\, Therefore it suffices to
prove the statements for $\Ftilde_n$ instead of $F_n$, with
$\alpha$ and $\alphaprime$ replaced by $\alphatilde$ and
$\alphaprimetilde$. This allows us, from now on, to suppose
$\alpha\in\ZK$ and $a_0\;=\;1$.

As already explained, we can assume $n\ge 0$. There is no
restriction in suppo\-sing $k\ge 2$; (if we prove the result for
a value of $k\geq 2$, we deduce it right away for smaller values
of $k$, since we consider Thue inequations and not Thue
equations). If $k$ were asumed to be $\geq 2$, we would not need $
\cst{kappa1}$, as is easily seen, and the conclusion would read
$$
\max\{\epsilon^{|n|} , \; |x|, \; |y|\}\;\le\; k^{\cst{kappa2}}.
$$
Without loss of generality we can assume that $n$ is sufficiently
large. As a matter of fact, if $n$ is bounded, we are led to some
given Thue equations, and Theorem $\ref{Theoreme:principal}$
follows from Theorem 5.1 of \cite{ST}.

Let us recall that for an algebraic number $\gamma $, the house
of $\gamma$, denoted $\house{\gamma} $\,, is by definition the
maximum of the absolute values of the conjugates of $\gamma$. 
Moreover, $d$ is the degree of the algebraic number field $K$ 
(namely $d\;=\;3$ here) and $R$ is the regulator of $K$ 
(viz. $R=\log \epsilon$), where, from now on,  $\epsilon$ is the fundamental unit $> 1$ of the non totally real cubic
 field $K$.
The next statement is Lemma A.6 of \cite{ST}.

\begin{lemme}\label{Lemme: CorollaryA.6ST}
 Let $\gamma$ be a nonzero element of $\ZK$ of norm $\le M$.
 There exists a unit $\eta\in\ZK^\times$ such that the house
 $\house{\eta\gamma}$ is bounded by an effectively computable
 constant which depends only on $d$, $R$ and $M$.
 \end{lemme}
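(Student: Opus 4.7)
The plan is to invoke Dirichlet's unit theorem together with the logarithmic embedding: multiplying $\gamma$ by a suitable unit $\eta$ amounts to translating the logarithmic image of $\gamma$ by a lattice vector, and since that lattice is full-rank in the trace-zero hyperplane with covolume controlled by the regulator $R$, one can force every coordinate of the translated vector to be small.

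First I would set up the logarithmic embedding $L:K^\times\to\R^{r_1+r_2}$ defined by $L(\beta)_i = e_i\log|\sigma_i(\beta)|$, with $e_i=1$ at a real place and $e_i=2$ at a complex place, so that $\sum_i L(\beta)_i = \log|\rmN_{K/\Q}(\beta)|$. Applied to $\gamma$, the coordinate-sum has absolute value at most $\log M$. Writing
$$
L(\gamma) \;=\; \frac{\log|\rmN_{K/\Q}(\gamma)|}{d}\bigl(e_1,\dots,e_{r_1+r_2}\bigr) + w
$$
decomposes $L(\gamma)$ into a ``diagonal'' part whose norm is bounded in terms of $d$ and $M$ alone, plus a vector $w$ lying in the hyperplane $H_0 = \{x\in\R^{r_1+r_2}:\sum_i x_i = 0\}$.

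Next, by Dirichlet's theorem, $\Lambda := L(\ZK^\times)$ is a full lattice of rank $r_1+r_2-1$ in $H_0$, of covolume $\sqrt{r_1+r_2}\,R$. A quantitative geometry-of-numbers argument (for instance, Minkowski's theorem on successive minima) then yields an effectively computable bound $C(d,R)$ for the covering radius of $\Lambda$ in $H_0$. I would choose $\eta\in\ZK^\times$ such that $L(\eta)$ is within distance $\le C(d,R)$ of $-w$; then every coordinate of $L(\eta\gamma) = L(\eta)+L(\gamma)$ has absolute value at most $C(d,R) + |t|\cdot\max_i e_i$, where $t=\log|\rmN_{K/\Q}(\gamma)|/d$.

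Since $e_i\log|\sigma_i(\eta\gamma)| = L(\eta\gamma)_i$, each $|\sigma_i(\eta\gamma)|$, and therefore $\house{\eta\gamma}$, is bounded by an effectively computable function of $d$, $R$ and $M$, as required. The main technical obstacle is the quantitative covering-radius step: turning the covolume $\sqrt{r_1+r_2}\,R$ into an explicit bound on the covering radius with the dependence on $d$ and $R$ fully exposed requires a careful invocation of the geometry of numbers, but beyond Dirichlet's unit theorem and Minkowski's convex-body theorem no new ingredient is needed.
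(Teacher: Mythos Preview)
The paper does not prove this lemma; it simply quotes it as Lemma~A.6 of Shorey--Tijdeman, and then quotes the sharper Lemma~A.15 of the same reference (stated here as Lemma~\ref{Lemme: LemmaA.15ST}) in order to make the dependence on $M$ explicit. So there is no in-paper argument to compare against; your sketch is in fact the standard proof that underlies those cited lemmas.

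Your outline is correct in spirit, but one step is overstated. You write that ``Minkowski's theorem on successive minima'' suffices to turn the covolume $\sqrt{r_1+r_2}\,R$ into an effective bound $C(d,R)$ on the covering radius of $\Lambda=L(\ZK^\times)$ in $H_0$, and that ``no new ingredient is needed''. For a general lattice of rank $r=r_1+r_2-1$, covolume alone does \emph{not} bound the covering radius: Minkowski's second theorem controls only the product $\lambda_1\cdots\lambda_r$, while the covering radius is comparable to $\lambda_r$, which can be arbitrarily large if $\lambda_1$ is tiny. What rescues the argument is an effective lower bound on $\lambda_1$, i.e.\ on the logarithmic height of a non-torsion unit of degree $\le d$; Dobrowolski's theorem (or even the cruder effective bound $\house{\eta}\ge 1+c_d$ for a non-root-of-unity algebraic integer) supplies this. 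With that extra input one does get $\lambda_r\le C(d)\,R$ and the rest of your argument goes through. In the specific situation of the present paper ($d=3$, one real and one complex place, hence $r=1$) the unit lattice has rank one, its covering radius is half the length of the single generator, and that length is essentially $R$ itself---so in that case your claim is correct without any further ingredient.
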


We need to make explicit the dependence upon $M$, and for this,
it suffices to apply Lemma A.15 of \cite{ST}, which we want to
state, under the asumption that the $d$ embeddings of the
algebraic number field $K$ in $\C$ are noted
$\sigma_1,\ldots,\sigma_d$.

 \begin{lemme}\label{Lemme: LemmaA.15ST}
 Let $K$ be an algebraic number field of degree $d$ and
 let $\gamma$ be a nonzero element of $\ZK$ whose absolute
 value of the norm is $m$.
 Then there exists a unit $\eta \in\ZK^\times$ such that
 $$
 \frac{1}{R}\;
 \max_{1\le j\le d}
 \left|
 \log (m^{-1/d}
 |\sigma_j(\eta\gamma)|)
 \right|
 $$
 is bounded by an effectively computable constant which
 depends only on $d$.
 \end{lemme}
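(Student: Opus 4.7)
The plan is to reformulate the claim as a covering-radius problem for the unit lattice under the logarithmic embedding, and then combine Dirichlet's unit theorem with Minkowski's second theorem. Define $L\colon K^\times \to \R^d$ by $L(\beta) := (\log|\sigma_1(\beta)|,\ldots,\log|\sigma_d(\beta)|)$. Since $|\rmN_{K/\Q}(\gamma)| = m$, the vector
$$ v \;:=\; L(\gamma) - \frac{\log m}{d}(1,\ldots,1) $$
lies in the trace-zero hyperplane $\{x \in \R^d \mid \sum_j x_j = 0\}$ and respects the identification of complex conjugate embeddings, so $v$ belongs to the $r$-dimensional subspace $H'$ in which, by Dirichlet's theorem, the lattice $\Lambda := L(\ZK^\times)$ has full rank $r := r_1 + r_2 - 1$. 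Since $L(\eta\gamma) = L(\eta) + L(\gamma)$, the desired inequality is precisely the assertion that there is a lattice point of $\Lambda$ within $\ell_\infty$-distance $c(d)R$ of $-v$; hence it suffices to bound the covering radius of $\Lambda$ with respect to the $\ell_\infty$-norm by a constant depending only on $d$ times $R$.

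To produce such a covering-radius bound, I would exhibit a basis $L(\eta_1),\ldots,L(\eta_r)$ of $\Lambda$ whose vectors all satisfy $\|L(\eta_i)\|_\infty \le c_1(d) R$. Given such a basis, expand $-v = \sum_{i=1}^r a_i L(\eta_i)$ with $a_i \in \R$, round each coefficient to the nearest integer $b_i$, and set $\eta := \prod_i \eta_i^{b_i}$. Then $L(\eta) + v = -\sum_i (a_i - b_i) L(\eta_i)$ has $\ell_\infty$-norm at most $\frac{r}{2} c_1(d) R$, which is exactly the required form of the bound.

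The main obstacle is producing such a short basis. Applying Minkowski's second theorem to $\Lambda$ equipped with the $\ell_\infty$-norm yields successive minima $\lambda_1 \le \cdots \le \lambda_r$ of $\Lambda$ satisfying $\prod_{i=1}^r \lambda_i \le c_2(d) \cdot \mathrm{covol}(\Lambda)$, and the covolume equals $R$ up to a factor depending only on $d$. To convert this product estimate into an individual upper bound on $\lambda_r$ that is linear in $R$, one needs a positive lower bound on $\lambda_1$ depending only on $d$. This is supplied by the classical Schinzel--Zassenhaus-type estimate: every non-torsion unit $\eta \in \ZK^\times$ has some conjugate of absolute value at least $1 + \delta(d)$ for an effective $\delta(d) > 0$, whence $\lambda_1 \ge \log(1 + \delta(d)) =: c_3(d) > 0$. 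Therefore
$$ \lambda_r \;\le\; \lambda_1^{-(r-1)} \prod_{i=1}^r \lambda_i \;\le\; c_4(d) R, $$
and a standard Mahler-type construction then produces a basis of $\Lambda$ with $\|L(\eta_i)\|_\infty \le r \lambda_i \le c_1(d) R$, completing the plan.
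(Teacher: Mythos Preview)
The paper does not prove this lemma; it merely quotes it as Lemma~A.15 of \cite{ST}. Your argument is the standard proof of that result: reduce to the $\ell_\infty$-covering radius of the unit lattice in the trace-zero hyperplane, bound $\prod_i \lambda_i$ by $c(d)R$ via Minkowski's second theorem, bound $\lambda_1$ from below by an effective constant depending only on $d$, and round. The outline is correct.

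Two small remarks. First, the lower bound $\lambda_1 \ge c_3(d)$ is the only place where the dependence on $d$ alone (rather than on $K$) is nontrivial; what you need is an effective height lower bound for non-torsion algebraic numbers of bounded degree, as supplied by Dobrowolski or even the original (weaker but effective) result of Schinzel and Zassenhaus themselves, rather than the full Schinzel--Zassenhaus conjecture. Second, the Mahler basis step is unnecessary: the $r$ linearly independent lattice vectors realising the successive minima already span $H'$ over $\R$, so you may expand $-v$ in them and round; the resulting integer combination lies in $\Lambda$ (even if those vectors generate only a proper sublattice), giving the covering-radius bound $\mu \le \frac{1}{2}\sum_i \lambda_i \le \frac{r}{2}\lambda_r$ directly.
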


 Since $d\;=\;3$, $K\;=\;\Q(\alpha)$ and the regulator $R$ of $K$ is an effectively computable constant (see for instance \cite{Cohen}, \S 6.5),
 the conclusion of Lemma $\ref{Lemme: LemmaA.15ST}$ is
 $$
 -\cst{kappa3}\; \le\;
 \log (
 |\sigma_j(\eta\gamma)|/{\root 3 \of m} )\;\le\; \Newcst{kappa3},
 $$
which can also be written as
 $$
 \cst{kappa4} {\root 3 \of m} \;\le\;
 |\sigma_j(\eta\gamma)|\;\le\; \cst{kappa5} {\root 3 \of m},
 $$
with two effectively computable positive constants $\Newcst{kappa4}$
and $\Newcst{kappa5}$. We will
 use only the upper bound \footnote{The
lower bound follows from looking at the norm!}: under the
hypotheses of Lemma $\ref{Lemme: CorollaryA.6ST}$ with $d\;=\;3$,
 when $\gamma$ is a nonzero element of $\ZK$ of norm $\le
M$, there exists a unit $\eta$ of $\ZK^\times$ such that
$$
 \house{\eta\gamma}\; \le\; \cst{kappa5} {\root 3 \of M}.
 $$

Since $(n,x,y)$ satisfies ($\ref{Eq:InegaliteThue}$), the element
$\gamma\;=\;x-\epsilon^n \alpha y$ of $\ZK$ has a norm of absolute
value $\le k$. It follows from Lemma $\ref{Lemme: LemmaA.15ST}$
that $\gamma$ can be written as
\begin{equation}\label{Equation:PartieReelle}
x-\epsilon^n \alpha y\;=\;\epsilon^\ell \xi_1
\end{equation}
with $\ell\in\Z$, $\xi_1\in\ZK$ and the house of $\xi_1$,
$\house{\xi_1} \;=\;\max\{|\xi_1|,\; |\xiprime_1|\}$, satisfies
$$\house{\xi_1}\;\le \; \Newcst{kappa6} {\root 3 \of k}.$$
We will not use the full force of this upper bound, but only the consequence
\begin{equation}\label{Equation:xiun}
\max\left\{|\xi_1|^{-1}, |\xiprime_1|^{-1},
\house{\xi_1}\right \}\;\le\; k^{\Newcst{kappaxi}}.
\end{equation}
Taking the conjugate of ($\ref{Equation:PartieReelle}$) by
$\sigma$, we have
\begin{equation}\label{Equation:PremierConjugue}
x-\epsilonprime^n \alphaprime y\;=\; \epsilonprime^\ell \xiprime_1
\end{equation}
with $ \xiprime_1\;=\;\sigma(\xi_1)$.

Our strategy is to prove that $|\ell|$ is bounded by a constant times $\log k$, and that $|n|$ is also bounded by a constant times $\log k$; then we will show that $|y|$ is bounded by a a constant power of $k$ and deduce that $|x|$ is also bounded by a constant power of $k$.
 
Let us eliminate $x$ in ($\ref{Equation:PartieReelle}$) and
($\ref{Equation:PremierConjugue}$) to obtain
\begin{equation}\label{Equation:Y}
y\;=\;-\frac{\epsilon^\ell \xi_1 - \epsilonprime^\ell \xiprime_1}
{\epsilon^n \alpha- \epsilonprime^n \alphaprime };
\end{equation}
since we supposed $\epsilon^n\alpha$ irrational, we did not divide
by $0$. The complex conjugate of
 ($\ref{Equation:PremierConjugue}$) is written as
\begin{equation}\label{Equation:DeuxiemeConjugue}
x-\epsilonprimebar^n\, \alphaprimebar y\;=\;\epsilonprimebar^\ell\,
\xiprimeunbar.
\end{equation}
We eliminate $x$ and $y$ in the three equations
($\ref{Equation:PartieReelle}$), ($\ref{Equation:PremierConjugue}$)
and ($\ref{Equation:DeuxiemeConjugue}$) to obtain a unit
equation \`a la Siegel:
\begin{equation}\label{Equation:EquationAuxUnites}
\epsilon^\ell \xi_1 (\alphaprime \epsilonprime^n-\alphaprimebar \;
\epsilonprimebar^n)
 + \epsilonprime^\ell \xiprime_1 (\alphaprimebar \; \epsilonprimebar^n-\alpha\epsilon^n)
 + \epsilonprimebar^\ell \, \xiprimeunbar(\alpha\epsilon^n-
 \alphaprime\epsilonprime^n) \;=\;0.
\end{equation}

In the remaining part of this section
$\ref{S:EstimationsElementaires}$, we suppose
\begin{equation}\label{Equation:nGrand}
\epsilon^n |\alpha|\; \ge\; 2| \epsilonprime^n \alphaprime|.
\end{equation}
Note that if this inequality is not satisfied, then we have
$$
\epsilon^{3n/2}\; < \ \frac{2|\alphaprime|}{|\alpha|} \; <  \Newcst{kappa:epsilonpuissancen},
$$
and this leads to the inequality ($\ref{Equation:Majoration-n}$),
and to the rest of the proof of Theorem $\ref{Theoreme:principal}$
by using the argument following the inequality
($\ref{Equation:Majoration-n}$).

For $\ell>0$, the absolute value of the numerator
$\epsilon^\ell \xi_1 - \epsilonprime^\ell \xiprime_1$ in
$(\ref{Equation:Y}$) is increasing like $\epsilon^\ell$ and for
$\ell<0$ it is increasing like $\epsilon^{|\ell|/2}$; for $n>0$,
 the absolute value of the denominator $\epsilon^n \alpha-
\epsilonprime^n \alphaprime$ is increasing like $\epsilon^n$ and
for $n<0$ it is increasing like $\epsilon^{|n|/2}$. In order to
extract some information from the equation ($\ref{Equation:Y}$), we
write it in the form
$$
y\;=\;\pm \frac{A-a}{B-b}
$$
with
$$
B\;=\; \epsilon^n \alpha, \quad b\;=\; \epsilonprime^n \alphaprime, \qquad
\{A,a\}\;=\;\left\{ \epsilon^\ell \xi_1 \; , \; \epsilonprime^\ell
\xiprime_1 \right\},
$$
the choice of $A$ and $a$ being dictated by
$$
|A| \;=\; \max\{\epsilon^\ell |\xi_1| \; , \; |\epsilonprime^\ell
\xiprime_1|\}, \quad |a|\;=\; \min\{\epsilon^\ell |\xi_1| \; , \;
|\epsilonprime^\ell \xiprime_1|\}.
$$
Since $|A-a|\le 2|A|$ and since $|b|\le |B|/2$ because of ($\ref{Equation:nGrand}$), we have $|B-b|\ge |B|/2$, so we get 
$$
|y|\;\le\; 4 \frac{|A|}{|B|}\cdotp
$$
We will consider the two cases corresponding to the possible
signs of $\ell$, (remember that $n$ is positive).

{\bf First case.} Let $ \ell \le 0$. We have
$$
|A|\;\le\; \Newcst{kappaellpositifmajA} \epsilon^{|\ell|/2}
k^{\cst{kappaxi}}.
$$
We deduce from ($\ref{Equation:Y}$)% the existence of a constant $\Newcst{kappa7}$ such that
\begin{equation}\label{Equation:majyellnegatif} %(7)
1\;\le\; |y|\le 4 \left|\frac{ \xiprime_1}{\alpha}\right | 
\epsilon^{(|\ell |/2)-n} \;\le\; \Newcst{kappa9prime}
\epsilon^{(|\ell |/2)-n} k^{\cst{kappaxi}}.
 %+\Newcst{kappa7},
\end{equation}
Hence there exists $\Newcst{kappa8bis} $ such that
$$
%\begin{equation}\label{Equation:ellNegatifMajy}
0\;\le\; \log |y| \;\le\; \left(\frac{ |\ell| }{2}-n \right)
\log\epsilon +\cst{kappa8bis} \log k,
%\end{equation}
$$
from which we deduce the inequality
\begin{equation}\label{Equation:majnellnegatif} %(9)
n\le \frac{ |\ell| }{2}+ \Newcst{kappa9} \log k,
\end{equation}
which will prove useful: $n$ is roughly bounded by $|\ell|$. From
($\ref{Equation:PremierConjugue}$) we deduce the existence of a
constant $\Newcst{kappa10}$ such that
\begin{equation}\label{Equation:ellNegatifMajx} %(10)
|x|\; \le \;\epsilon^{-n/2} |\alphaprime y| + \cst{kappa10}
k^{\cst{kappaxi}} \epsilon^{|\ell |/2}.
\end{equation}

{\bf Second case.} Let $ \ell > 0$. We have
$$
|A|\;\le\; \Newcst{kappaellnegatifmajA} \epsilon^{\ell}
k^{\cst{kappaxi}}.
$$
We deduce from ($\ref{Equation:Y}$) the upper bound
\begin{equation}\label{Equation:ellPosittifMajy} %(11)
1\;\le \;|y|\;\le \;4 \left|\frac{\xi_1}{\alpha}\right |
\epsilon^{\ell-n} \;\le\; \Newcst{kappa13prime} k^{\cst{kappaxi}}
\epsilon^{\ell-n} ;
\end{equation}
hence there exists $\Newcst{kappa8} $ such that
$$%\begin{equation}\label{Equation:ellPositifMajy}
0\;\le\; \log |y|\; \le\; \left(\ell -n \right) \log\epsilon
+\cst{kappa8} \log k.
$$%\end{equation}
Consequently,
\begin{equation}\label{Equation:ellPositifMajn}
n\;\le\; \ell+\Newcst{kappaellpositif} \log k.
\end{equation}
From the relation ($\ref{Equation:PremierConjugue}$) we deduce the
existence of a constant $\Newcst{kappa11} $ such that
\begin{equation}\label{Equation:ellPositifMajx} %(13)
1\;\le\; |x|\;\le\; \epsilon^{-n/2} |\alphaprime y| +\cst{kappa11}
k^{\cst{kappaxi}} \epsilon^{- \ell/ 2}.
\end{equation}

By taking into account the inequalities
($\ref{Equation:majyellnegatif}$), ($\ref{Equation:majnellnegatif}$)
and ($\ref{Equation:ellNegatifMajx}$) in the case $\ell\le 0$,
and the inequalities ($\ref{Equation:ellPosittifMajy}$),
($\ref{Equation:ellPositifMajn}$) and
($\ref{Equation:ellPositifMajx}$) in the case $\ell > 0$, let us
show that the existence of a constant $\Newcst{kappa:ellmajore} $
satisfying $|\ell|\le\cst{kappa:ellmajore} \log k$ allows to conclude
the proof of Theorem $\ref{Theoreme:principal}$. As a matter of
fact, suppose
\begin{equation}\label{Equation:nellmajore}
|\ell|\le\cst{kappa:ellmajore} \log k.
\end{equation}
Then ($\ref{Equation:majnellnegatif}$) and
($\ref{Equation:ellPositifMajn}$) imply $n\le
\Newcst{kappanplus1}\log k$, whereupon $|\ell|$ and $n$ are
effectively boun\-ded by a constant times $\log k$. This implies that the elements
$\epsilon^t$, with $t$ being $(|\ell|/2)-n$, $\ell-n$, $-n/2$, $|\ell|/2$ or $-\ell/2$, appearing in ($\ref{Equation:majyellnegatif}$),
($\ref{Equation:ellPosittifMajy}$),
($\ref{Equation:ellNegatifMajx}$) and
($\ref{Equation:ellPositifMajx}$) are boun\-ded from above by
$k^{\Newcst{kappa21bis}}$ for some constant $\cst{kappa21bis}$.
Therefore the upper bound of $|y|$ in the conclusion of Theorem
$\ref{Theoreme:principal}$ follows from
($\ref{Equation:majyellnegatif}$) and
($\ref{Equation:ellPosittifMajy}$) and the upper bound of $|x|$ is a
consequence of ($\ref{Equation:ellNegatifMajx}$) and
($\ref{Equation:ellPositifMajx}$). Our goal is to show that sooner
or later, we end up with the inequality
$(\ref{Equation:nellmajore})$.

 In the case $\ell> 0$, the lower bound $|x|\ge 1$ provides an
extra piece of information. If the term $ \epsilonprime^\ell
\xiprime_1$ on the right hand side of
($\ref{Equation:PremierConjugue}$) does not have an absolute value
$<1/2$, then the upper bound ($\ref{Equation:nellmajore}$) holds
true and this suffices to claim the proof of Theorem
$\ref{Theoreme:principal}$. Suppose now $ |\epsilonprime^\ell
\xiprime_1|<1/2$. Since the relation
($\ref{Equation:ellPosittifMajy}$) implies
$$
\epsilon^{-n/2} | \alphaprime y|\; \le\; 4 \left|
\frac{\xi_1\alphaprime}{\alpha} \right | \epsilon ^{\ell -(3n/2)} ,
$$
we have
$$
1\;\le\; |x|\;\le\; 4\left|\frac{\xi_1\alphaprime}{\alpha} \right |
\epsilon ^{\ell -(3n/2)} + \frac{1}{2}
$$
and
$$
1\;\le\; 8\left|\frac{\xi_1\alphaprime}{\alpha} \right | \epsilon
^{\ell -(3n/2)}.
$$
We deduce
% the existence of a constant $\Newcst{kappa14}$ such that
\begin{equation}\label{troisdemiden}
 \frac{3}{2}n\;\le\; \ell + \cst{kappa14} \log k.
\end{equation}
The upper bound in ($\ref{troisdemiden}$) is sharper than the one
in ($\ref{Equation:ellPositifMajn}$), but, amazingly, we used
($\ref{Equation:ellPositifMajn}$) to establish
($\ref{troisdemiden}$).

When $\ell<0$, we have $|\ell-n|\;=\;n+|\ell|\ge |\ell|$, while in the
case $\ell \ge 0$ we have
$$
|\ell-n|\;\ge\; \frac{1}{3} \ell + \frac{2}{3} \ell -n\;\ge\;
\frac{1}{3} |\ell| - \cst{kappa14} \log k,
 $$
because of ($\ref{troisdemiden}$). Therefore, if $\ell$ is
positive (recall ($\ref{troisdemiden}$)), zero or negative (recall ($\ref{Equation:majnellnegatif}$)), we always have
\begin{equation}\label{Equation:majoration-n}
n \;\le\; \frac{2}{3}|\ell|+ \cst{kappa15} \log k
\quad\hbox{and}\quad |\ell-n|\;\ge\; \frac{1}{3} |\ell| -
\cst{kappa14} \log k
\end{equation}
with $\Newcst{kappa14}>0$ and $\Newcst{kappa15}>0$.

%-:-:-:-:-:-:-:-:-:-:-:-:-:-:-:-:-:-:-:-:-:-:-:-:-:-:-:-:-:-:-:-:-:-:-:-:-:-:-:-:-:-:-:-:-:-:-:-:-:-:-:-:-:-:-:-:-:-:-:-:-:-:-:-:-:-:

\section{Diophantine tool}\label{S:OutilDiophantien}

 Let us remind what we mean by the absolute logarithmic height $\rmh(\alpha)$ of an
 algebraic number $\alpha$ (cf.~\cite{GL326}, Chap. 3).
 For $L$ a number field and for $\alpha \in L$, we define
$$
 \rmh(\alpha)\;=\;\frac{1}{[L:Q]} \log \, H_L(\alpha),
 $$
with
 $$
 H_L(\alpha)\;=\; \prod_{\nu } \max\{1,|\alpha |_\nu
 \}^{d_\nu} 
 $$
 where $\nu$ runs over the set of places of $L$, with $d_\nu$ being the local degree of the place $\nu$ if $\nu$ is ultrametric, 
$d_\nu \;=\; 1$ if $\nu$ is real, $d_\nu\;=\;2$ if $\nu$ is complex. 
 When $f(X)\in\Z[X]$ is the minimal polynomial of $\alpha$ and $f(X)\;=\;a_0\displaystyle\prod_{1\leq j\leq d}
 (X-\alpha_j)$, with $\alpha_1\;=\;\alpha$, it happens that
 $$
 \rmh(\alpha)\;=\; \frac{1}{d} \log\,
 M(f) \quad \mbox{ with }\quad
 M(f)\;=\;|a_0|\prod_{1\leq j \leq d} \max\{1, |\alpha_j|\}.
 $$

We will use two particular cases of Theorem 9.1 of
\cite{GL326}. The first one is a lower bound for the linear form of
logarithms $\;b_0\lambda_0+b_1\lambda_1+b_2\lambda_2$,\, and the second one is a
lower bound for $ \, \gamma_1^{b_1} \gamma_2^{b_2}-1$. \,Here is the
first one.\medskip

\begin{proposition}\label{Proposition:FormeLineaireLogarithmes1}
There exists an explicit absolute constant $c_0>0$ with the following
property. Let $\lambda_0, \lambda_1,\lambda_2$ be three logarithms of algebraic
numbers and let $b_0,b_1,b_2$ be three rational integers such that
$\Lambda\;=\;b_0\lambda_0+b_1\lambda_1+b_2\lambda_2$ be nonzero. Write
$$
\gamma_0\;=\;e^{\lambda_0},\quad \gamma_1\;=\;e^{\lambda_1},\quad \gamma_2\;=\;e^{\lambda_
2}\quad\hbox{and}\quad D\;=\;[\Q(\gamma_0,\gamma_1,\gamma_2) : \Q].
$$
Let $A_0$, $A_1$, $A_2$ and $B$ be real positive numbers satisfying
$$
\log A_i\;\ge \; \max\left\{ \rmh(\gamma_i), {|\lambda_i|\over D} ,
{1\over D}\right\} \qquad(i\;=\;0,1,2)
$$
and
$$
 B\;\ge \max\left\{
 e,\; 
 \;  D,
 \; {|b_2|\over D\log A_0}+ {|b_0|\over D\log A_2},\; 
 \; {|b_2|\over D\log A_1}+ {|b_1|\over D\log A_2}
 \right\}. 
$$
Then
$$
|\Lambda|\;\ge\;\exp\bigl\{- c_0 D^5 (\log D) (\log A_0) (\log A_1) (\log A_2)(\log
B)\bigr\}.
$$
\end{proposition}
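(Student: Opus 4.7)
The plan is to obtain this proposition by specializing Theorem~9.1 of \cite{GL326} to the case of $n=3$ logarithms. That theorem provides, for any nonzero linear form $\Lambda = b_0\lambda_0 + \cdots + b_{n-1}\lambda_{n-1}$ in $n$ logarithms of algebraic numbers, a lower bound of the shape
\[
|\Lambda| \;\ge\; \exp\bigl\{-c(n)\, D^{n+2}\, (\log D)\, (\log A_0)\cdots(\log A_{n-1})\, \log B\bigr\},
\]
under hypotheses on the $\log A_i$ bounding both the heights and the moduli of the $\lambda_i$ (plus the technical $1/D$ lower bound), and under a hypothesis that $B$ controls the coefficients $b_i$. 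Setting $n=3$ reproduces exactly the exponent $D^5 (\log D)$ and the product of three factors $\log A_i$. The dissymmetric shape of the hypothesis on $B$---only two mixed ratios, always involving $\log A_2$ in the denominator---reflects the fact that in the proof one of the three logarithms plays a distinguished role (being the one used to eliminate a variable in the auxiliary construction); the formulation in \cite{GL326} is known to give exactly this form once the indices are labelled so that index $2$ is the distinguished one.

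To prove Theorem~9.1 itself, the route is Baker's transcendence method in its modern form. One constructs an auxiliary entire function
\[
F(z_1,z_2) \;=\; \sum_{t_0,t_1,t_2} p(t_0,t_1,t_2)\, \gamma_0^{t_0 (b_1 z_1 + b_2 z_2)/b_0}\, \gamma_1^{t_1 z_1}\, \gamma_2^{t_2 z_2},
\]
where the exponents in $\gamma_0$ are rewritten using $b_0\lambda_0 = -b_1\lambda_1 - b_2\lambda_2 + \Lambda$ so that the smallness of $\Lambda$ is absorbed into the analytic estimate. Siegel's lemma produces integer coefficients $p(t_0,t_1,t_2)$, not all zero, of controlled height, such that $F$ together with enough of its partial derivatives vanishes on a large grid in $\Z^2$. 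One then iterates an extrapolation step: Schwarz's lemma propagates the analytic smallness of $F$ from the original grid to a strictly larger grid, while a Liouville estimate at the algebraic points forces new vanishing. The iteration terminates via a zero-estimate on $\mathbf{G}_m^3$ (Philippon's multiplicity estimate), which yields a contradiction unless $|\Lambda|$ satisfies the claimed lower bound. An alternative development, due to Laurent, replaces the auxiliary function by an interpolation determinant whose nonvanishing is ensured by the same zero estimate; both variants are laid out in \cite{GL326}.

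The main obstacle is not any single step but the simultaneous fine-tuning required to produce the precise symmetric dependence on the three $\log A_i$ together with the specific asymmetric shape of the hypothesis on $B$. This amounts to choosing, in a carefully balanced way, the degrees of the auxiliary polynomial in each variable $t_0,t_1,t_2$, the order of vanishing, and the size of the extrapolation grid, then optimizing to minimize the final lower bound. Since \cite{GL326} performs this optimization in full generality, the proof of the present proposition requires no further work: it suffices to specialize Theorem~9.1 to $n=3$ with the stated values of $A_0,A_1,A_2$ and $B$ and to verify that our hypotheses imply those of the general statement, which is straightforward.
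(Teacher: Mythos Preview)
Your proposal is correct and matches the paper's own treatment: the paper does not prove this proposition at all but simply states it as a particular case of Theorem~9.1 of \cite{GL326} with $n=3$. Your additional sketch of how Theorem~9.1 itself is established (Baker's method, auxiliary function, Schwarz lemma, zero estimate) is accurate background but goes beyond what the paper provides.
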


The second particular case of Theorem 9.1 in \cite{GL326} that we will use is the next Proposition $\ref{Proposition:FormeLineaireLogarithmes2}$.
It also follows from Corollary 9.22 of \cite{GL326}. We could as well deduce it from Proposition $\ref{Proposition:FormeLineaireLogarithmes1}$.

\begin{proposition}\label{Proposition:FormeLineaireLogarithmes2}
Let $D$ be a positive integer. There exists an explicit constant $c_1>0$, depending only on $D$ with the following property.
Let $K$ be a number field of degree $\le D$. Let $\gamma_1, \gamma_2$ be nonzero elements in $K$ and let $b_1,b_2$ be rational integers. Assume $
\gamma_1^{b_1} \gamma_2^{b_2}\not=1$.
Set
$$
B\;=\;\max\{2, \; |b_1|, |b_2|\}
\quad\hbox{ and, for $i\;=\;1,2$, } \quad
A_i\;=\;\exp\bigl( \max\{e,\; \rmh(\gamma_i)\} \bigr).
$$
Then
$$
|\gamma_1^{b_1} \gamma_2^{b_2} -1|\ge \exp\bigl\{ -
c_1 (\log B)(\log A_1) (\log A_2)\bigr\}.
$$
\end{proposition}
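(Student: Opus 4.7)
The plan is to deduce Proposition \ref{Proposition:FormeLineaireLogarithmes2} from Proposition \ref{Proposition:FormeLineaireLogarithmes1} by rewriting $|\gamma_1^{b_1}\gamma_2^{b_2}-1|$ as (essentially) a linear form in three logarithms, using $2\pi i$ as the auxiliary third logarithm to absorb the multivaluedness of $\log$.

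First I would dispose of the trivial case: if $|\gamma_1^{b_1}\gamma_2^{b_2}-1|\ge 1/2$, the inequality holds automatically once $c_1$ is large enough. So I may assume $|\gamma_1^{b_1}\gamma_2^{b_2}-1|<1/2$. Fix the principal determinations $\lambda_i=\log\gamma_i$ with imaginary part in $(-\pi,\pi]$, and choose the unique $b_0\in\Z$ such that
$$\Lambda\;=\;b_0(2\pi i)+b_1\lambda_1+b_2\lambda_2$$
has imaginary part in $(-\pi,\pi]$. Then $e^{\Lambda}=\gamma_1^{b_1}\gamma_2^{b_2}\ne 1$, so $\Lambda\ne 0$, and the elementary estimate $|z|\le 2|e^{z}-1|$ (valid when $|e^{z}-1|<1/2$ and $z$ is the principal branch of its logarithm) gives $|\Lambda|\le 2|\gamma_1^{b_1}\gamma_2^{b_2}-1|$. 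A lower bound on $|\Lambda|$ therefore suffices.

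Next I would apply Proposition \ref{Proposition:FormeLineaireLogarithmes1} to $\Lambda$ with $\gamma_0=1$, $\lambda_0=2\pi i$, $\log A_i=\max\{e,\rmh(\gamma_i),|\lambda_i|/D\}$ for $i=1,2$, and $\log A_0$ equal to a constant depending only on $D$ (for instance $2\pi$). The field $\Q(\gamma_0,\gamma_1,\gamma_2)$ has degree at most $D$, so the prefactor $D^5\log D$ of Proposition \ref{Proposition:FormeLineaireLogarithmes1} is absorbed into the final $c_1(D)$. The auxiliary integer $b_0$ satisfies $|b_0|\le (|b_1|\,|\lambda_1|+|b_2|\,|\lambda_2|)/(2\pi)+1/2$, and combined with $|\lambda_i|\le D\log A_i$ this shows that the parameter required by Proposition \ref{Proposition:FormeLineaireLogarithmes1} can be taken as $B'=C(D)\cdot B\cdot(\log A_1)(\log A_2)$. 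Proposition \ref{Proposition:FormeLineaireLogarithmes1} then yields
$$|\Lambda|\;\ge\;\exp\bigl\{-c(D)(\log A_1)(\log A_2)(\log B')\bigr\},$$
which, combined with $|\Lambda|\le 2|\gamma_1^{b_1}\gamma_2^{b_2}-1|$, is the desired inequality after folding every $D$-dependent factor into $c_1(D)$.

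The main obstacle is precisely this last bookkeeping step: since $\log B'=\log B+O_D(\log\log A_1+\log\log A_2)$ and the double-logarithmic terms $\log\log A_i$ are unbounded, reducing $\log B'$ to a single factor $\log B$ in the announced inequality is not automatic. The standard remedy is either to enlarge $A_i$ slightly before invoking Proposition \ref{Proposition:FormeLineaireLogarithmes1} (for example by replacing $\rmh(\gamma_i)$ by $\max\{\rmh(\gamma_i),\log B\}$) so that each $\log\log A_i$ is dominated by $\log A_j$, or to invoke directly Corollary 9.22 of \cite{GL326}, the cleanly formulated two-logarithm estimate that the paper explicitly mentions as an alternative route and which already incorporates this absorption.
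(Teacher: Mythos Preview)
The paper does not supply a proof of this proposition: it simply records that the statement is a special case of Theorem~9.1 of \cite{GL326}, that it also follows from Corollary~9.22 of \cite{GL326}, and that one ``could as well deduce it from Proposition~\ref{Proposition:FormeLineaireLogarithmes1}''.  Your sketch follows this last route, and the overall strategy (pass to a linear form in $\lambda_0=2\pi i$, $\lambda_1=\log\gamma_1$, $\lambda_2=\log\gamma_2$, dispose of the case $|\gamma_1^{b_1}\gamma_2^{b_2}-1|\ge 1/2$, use Lemma~\ref{lemme:GL326exo11b}(b) to compare $|\Lambda|$ with $|e^\Lambda-1|$) is the right one and matches what the paper has in mind.

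The ``main obstacle'' you identify, however, is not a genuine obstacle but an artifact of an overly crude estimate of the admissible~$B'$.  The whole point of the shape of the $B$-condition in Proposition~\ref{Proposition:FormeLineaireLogarithmes1} is that each $|b_j|$ enters \emph{divided} by some $D\log A_i$; these denominators exactly absorb the factors $\log A_i$ that appear in your bound on $|b_0|$.  Concretely: assign $\lambda_0=2\pi i$, so that $\log A_0$ is an absolute constant, and (permuting $\gamma_1,\gamma_2$ if necessary) arrange $\log A_1\le\log A_2$.  Your own estimate gives
\[
|b_0|\;\le\;\frac{|b_1|\,|\lambda_1|+|b_2|\,|\lambda_2|}{2\pi}+1
\;\le\; C_D\,B\,(\log A_1+\log A_2)\;\le\;2C_D\,B\,\log A_2,
\]
hence $|b_0|/(D\log A_2)\le C'_D\,B$.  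The remaining three quotients $|b_2|/(D\log A_0)$, $|b_2|/(D\log A_1)$, $|b_1|/(D\log A_2)$ are each at most $C''_D\,B$ directly (using $|b_i|\le B$ and $\log A_i\ge e$).  Thus one may take $B'=C_D\,B$, so that $\log B'\le \log B+O_D(1)$, and no $\log\log A_i$ terms ever appear.  With this correction your deduction from Proposition~\ref{Proposition:FormeLineaireLogarithmes1} goes through without any need to enlarge the $A_i$ or to retreat to Corollary~9.22.
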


 Proposition $\ref{Proposition:FormeLineaireLogarithmes2}$
 will come into play via its following
 consequence.

\begin{corollaire}\label{Corollaire:DioSinus}
Let $\delta_1$ and $\delta_2$ be two real numbers in the interval %( 
$[0,2\pi)$. Suppose that the numbers $e^{i\delta_1}$ and
$e^{i\delta_2}$ are algebraic. There exists an explicit 
constant $c_2>0$, depending only upon
$\delta_1$ and $\delta_2$,
 with the following property: for each $n\in\Z$ such
that $\delta_1+n\delta_2\not\in\Z\pi$, we have
$$
|\sin(\delta_1+n\delta_2)|\;\ge\; (|n|+2)^{-c_2}.
$$
\end{corollaire}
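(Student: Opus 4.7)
The plan is to reduce the sine bound to the shape covered directly by Proposition \ref{Proposition:FormeLineaireLogarithmes2}, by expressing $\sin(\delta_1+n\delta_2)$ as (up to a unimodular factor) a value of $\gamma_1^{b_1}\gamma_2^{b_2}-1$.

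First I would set $\gamma_1 = e^{2i\delta_1}$ and $\gamma_2 = e^{2i\delta_2}$. These are squares of algebraic numbers, hence algebraic, and they both lie in the number field $K = \Q(e^{i\delta_1}, e^{i\delta_2})$ whose degree $D$ depends only on $\delta_1$ and $\delta_2$. Writing
$$
2i\sin(\delta_1+n\delta_2)\;=\;e^{i(\delta_1+n\delta_2)}-e^{-i(\delta_1+n\delta_2)}\;=\;e^{-i(\delta_1+n\delta_2)}\bigl(\gamma_1\gamma_2^n-1\bigr),
$$
and noting that the first factor on the right has modulus $1$, I get
$$
|\sin(\delta_1+n\delta_2)|\;=\;\tfrac{1}{2}\,|\gamma_1\gamma_2^n-1|.
$$
The hypothesis $\delta_1+n\delta_2\notin\Z\pi$ is exactly $\gamma_1\gamma_2^n\neq 1$, so the form $\gamma_1^{b_1}\gamma_2^{b_2}-1$ with $b_1=1$ and $b_2=n$ is nonzero.

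Next I would apply Proposition \ref{Proposition:FormeLineaireLogarithmes2}. Choose $A_i=\exp(\max\{e,\rmh(\gamma_i)\})$; these are constants depending only on $\delta_1,\delta_2$. The parameter $B=\max\{2,1,|n|\}$ is at most $|n|+2$, so $\log B\le \log(|n|+2)$. The proposition yields
$$
|\gamma_1\gamma_2^n-1|\;\ge\;\exp\bigl\{-c_1(\log B)(\log A_1)(\log A_2)\bigr\}\;\ge\;(|n|+2)^{-c_1(\log A_1)(\log A_2)},
$$
with $c_1$ depending only on $D$, hence only on $\delta_1,\delta_2$. Dividing by $2$ and absorbing the factor into a slightly larger constant $c_2$ (taking $c_2=c_1(\log A_1)(\log A_2)+1$, say, handles the factor $1/2$ for all $|n|\ge 0$) gives the claimed inequality.

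There is no real obstacle here; everything is routine once the identity between $|\sin(\delta_1+n\delta_2)|$ and $\tfrac{1}{2}|\gamma_1\gamma_2^n-1|$ is observed. The only mild points of care are (i) to use $\gamma_j=e^{2i\delta_j}$ rather than $e^{i\delta_j}$ so that the exponents $b_1,b_2$ line up with the two desired terms, and (ii) to check that $K=\Q(\gamma_1,\gamma_2)$ has bounded degree in terms of $\delta_1,\delta_2$ so that $c_1$ is genuinely a constant.
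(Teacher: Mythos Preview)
Your proof is correct and follows the same overall strategy as the paper---reduce the sine to an expression $|\gamma_1\gamma_2^{\,n}-1|$ and invoke Proposition~\ref{Proposition:FormeLineaireLogarithmes2}---but with a tidier choice of $\gamma_j$. The paper takes $\gamma_j=e^{i\delta_j}$, so that $\gamma_1\gamma_2^{\,n}=e^{i(\delta_1+n\delta_2)}$; it must then shift by the nearest integer multiple $\ell$ of $\pi$, set $t=\delta_1+n\delta_2-\ell\pi\in(-\pi/2,\pi/2]$, use the bound $|e^{it}+1|\ge\sqrt{2}$, and factor $|e^{2it}-1|=|e^{it}+1|\cdot|e^{it}-1|$ in order to pass from $|\sin t|$ to $|\gamma_1\gamma_2^{\,n}-1|$. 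Your choice $\gamma_j=e^{2i\delta_j}$ collapses all of this to the one-line identity $|\sin(\delta_1+n\delta_2)|=\tfrac{1}{2}|\gamma_1\gamma_2^{\,n}-1|$, and it makes the nonvanishing condition $\gamma_1\gamma_2^{\,n}\ne 1$ \emph{exactly} equivalent to the hypothesis $\delta_1+n\delta_2\notin\Z\pi$, so no auxiliary integer $\ell$ or parity discussion is needed. The cost is nil: $e^{2i\delta_j}$ is still algebraic, lies in the same field $\Q(e^{i\delta_1},e^{i\delta_2})$, and has height at most twice that of $e^{i\delta_j}$, so the resulting constant $c_2$ is of the same quality.
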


\begin{proof}
Write $\gamma_1\;=\;e^{i\delta_1}$ and $\gamma_2\;=\; e^{i\delta_2}$. By
hypothesis, $\gamma_1$ and $\gamma_2$ are algebraic with
$\gamma_1\gamma_2^n\not=1$. Let us use Proposition
$\ref{Proposition:FormeLineaireLogarithmes2}$ with $b_1\;=\;1$,
$b_2\;=\;n$. The parameters $A_1$ and $A_2$ depend only upon $\delta_1$ and $\delta_2$ and
the number $B\;=\;\max\{2,|n|\}$ is bounded from above by $ |n|+2$. Hence
$$
|\gamma_1\gamma_2^n-1|\;\ge\; (|n|+2)^{-c_3}
$$
where $c_3$ depends only upon $\delta_1$ and $\delta_2$.
Let $\ell$ be the nearest integer to $(\delta_1+n\delta_2)/\pi$
 (take the floor if there are two possible values)
 and let $t\;=\;\delta_1+n\delta_2-\ell\pi $. This real number
$t$ is in the interval $(-\pi/2,\pi/2]$. %)
Now
 $$ |e^{i t}+1|\;=\; |1+ \cos(t) +i\; \sin(t)|
 \;=\;\sqrt{2(1+ \cos (t))} \ge \sqrt{2}.
 $$
Since $e^{i t}\;=\;(-1)^\ell \gamma_1\gamma_2^n$, we deduce
 \begin{eqnarray*}
|\sin(\delta_1+n\delta_2)|& =& |\sin (t)|\; =\;	%[
 \frac{1}{2} \left|(-1)^{2\ell}e^{2i t}-1\right|\\[1mm]
&=& \frac{1}{2} \left|(-1)^{\ell}e^{i t}+1\right|\cdot
\left|(-1)^{\ell}e^{i t}-1\right|\;
 \ge\; \frac{\sqrt{2}}{2} \left|\gamma_1\gamma_2^n-1\right|.
\end{eqnarray*}
This secures the proof of Corollary $\ref{Corollaire:DioSinus}$.
\end{proof}

The following elementary lemma makes clear that $e^t\sim 1$
for $t\rightarrow 0$. The first (resp. second) part follows from
Exercice 1.1.a (resp. 1.1.b\, or\, 1.1.c) of \cite{GL326}. We will
use only the second part; the first one shows that the number $t$
in the proof of Corollary $\ref{Corollaire:DioSinus}$ is close to
$0$, but we did not need it.

 \begin{lemme}\label{lemme:GL326exo11b}
\null$\phantom{.}$ (a)
 For $t\in \C$, we have
 $$
 |e^t-1|\;\le\; |t|\max\{1,|e^t|\}.
 $$

 (b) If a complex number $z$ satisfies $|z-1|<1/2$, then there exists
 $t\in\C$ such that $e^t\;=\;z$ and $| t | \le 2|z-1|$. This $t$ is
 unique and is the principal determination of the logarithm of $z$:
 $$
 |\log z|\;\le\; 2 |z-1|.
 $$
 \end{lemme}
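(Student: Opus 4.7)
The approach for both parts is completely elementary: part (a) follows from the fundamental theorem of calculus applied to $e^s$ along the straight line from $0$ to $t$, and part (b) is obtained by defining $t$ through the power series of the principal branch of the logarithm and estimating by a geometric series. No deep input is required; the only care needed is in tracking the numerical constants so that one obtains a clean factor $2$ in part (b).

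For part (a), I would parametrize the segment from $0$ to $t$ by $s(u)=ut$ for $u\in[0,1]$ and write
\[
e^t-1=\int_0^1 t\, e^{ut}\,du,
\]
so that $|e^t-1|\le |t|\sup_{u\in[0,1]}|e^{ut}|$. Writing $t=x+iy$ with $x,y\in\R$, the map $u\mapsto |e^{ut}|=e^{ux}$ is monotone in $u$, hence its supremum on $[0,1]$ is attained either at $u=0$ (value $1$) or at $u=1$ (value $|e^t|$). This yields exactly $|e^t-1|\le |t|\max\{1,|e^t|\}$.

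For part (b), I would set $w=z-1$ with $|w|<1/2$ and define
\[
t\;=\;\sum_{n\ge 1}\frac{(-1)^{n-1}}{n}\,w^n,
\]
which converges absolutely since $|w|<1$. The standard identity $e^t=1+w=z$ holds by analytic continuation from the real case (the two holomorphic functions $e^t$ and $1+w$ of $w$ agree on the real interval $(-1/2,1/2)$). For the bound, I would estimate termwise:
\[
|t|\;\le\;\sum_{n\ge 1}\frac{|w|^n}{n}\;\le\;\sum_{n\ge 1}|w|^n\;=\;\frac{|w|}{1-|w|}\;\le\;2|w|,
\]
the last step using $|w|<1/2$. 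For uniqueness, any other $t'\in\C$ with $e^{t'}=z$ differs from $t$ by $2\pi i k$ for some nonzero $k\in\Z$, so $|t'|\ge 2\pi-|t|>2\pi-1>1\ge 2|w|$; hence $t$ is the unique value with $|t|\le 2|w|$, and coincides with the principal determination of $\log z$. Substituting $|t|\le 2|w|=2|z-1|$ gives $|\log z|\le 2|z-1|$.

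There is no real obstacle here; the only mildly delicate point is the constant in part (b), but the estimate $|w|/(1-|w|)\le 2|w|$ when $|w|\le 1/2$ resolves it immediately. The uniqueness argument, though short, is worth stating explicitly because the lemma asserts it.
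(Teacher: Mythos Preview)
Your argument for both parts is correct. In (a) the integral representation $e^t-1=\int_0^1 t\,e^{ut}\,du$ together with the monotonicity of $u\mapsto e^{u\,\mathrm{Re}(t)}$ gives exactly the stated bound. In (b) the power-series definition of $t$, the geometric-series estimate $|w|/(1-|w|)\le 2|w|$ for $|w|<1/2$, and the $2\pi i\Z$-periodicity argument for uniqueness are all sound; since $|t|<1<\pi$ you indeed land on the principal branch.

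As for comparison: the paper does not actually supply a proof of this lemma. It simply records that part (a) follows from Exercise~1.1.a and part (b) from Exercise~1.1.b or~1.1.c of \cite{GL326}, and moves on. Your write-up is precisely the kind of elementary verification those exercises ask for, so there is no meaningful methodological difference to discuss; you have filled in what the paper leaves to the reference.
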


%-:-:-:-:-:-:-:-:-:-:-:-:-:-:-:-:-:-:-:-:-:-:-:-:-:-:-:-:-:-:-:-:-:-:-:-:-:-:-:-:-:-:-:-:-:-:-:-:-:-:-:-:-:-:-:-:-:-:-:-:-:-:-:-:-:-:
\section{ Proof of Theorem $\ref{Theoreme:principal}$}\label{S:DemonstrationThmPpal}

Let us define some real numbers $\theta$, $\delta$ and $\nu$ in the interval %(
$[0,2\pi)$ by
$$
\epsilonprime\;=\;\frac{1}{\epsilon^{1/2}} e^{i\theta}, \quad
\alphaprime\;=\; |\alphaprime|e^{i\delta}, \quad \xiprime_1\;=\;|
\xiprime_1|e^{i\nu}.
$$
By ordering the terms of ($\ref{Equation:EquationAuxUnites}$),
we can write this relation as
$$
T_1+T_2+T_3\;=\;0,
$$
and the three terms involved are
 $$\left\{
\begin{array}{lcl}
T_1:\;=\; \epsilon^\ell \xi_1 (\alphaprime
\epsilonprime^n-\alphaprimebar \;
 \epsilonprimebar^n) &=&2i\xi_1 |\alphaprime| \epsilon^{\ell - n/2} \sin (\delta+ n\theta),\\[2mm]
 T_2:\;=\; \alpha\epsilon^n(\epsilonprimebar^\ell \xiprimeunbar - \epsilonprime^\ell \xiprime_1 )
 &=& - 2i | \xiprime_1| \alpha \epsilon^{n-\ell/2} \sin (\nu+ \ell\theta),\\[2mm]
T_3:\;=\; \xiprime_1 \epsilonprime^\ell\alphaprimebar \;
\epsilonprimebar^n- \xiprimeunbar \epsilonprimebar^\ell
\alphaprime\epsilonprime^n &=& 2i | \xiprime_1 \alphaprime|
\epsilon^{-(n+\ell)/2} \sin (\nu-\delta+(\ell-n)\theta).
\end{array}\right.
$$
It turns out that these three terms are purely imaginary. 
We write this zero sum as
$$a+b+c\;=\;0\;\mbox{ with }\; |a|\,\ge\, |b|\,\ge \,|c|,
$$
and we use the fact that this implies that $|a|\le 2|b|$. Thanks
to ($\ref{Equation:majoration-n}$), Corollary
$\ref{Corollaire:DioSinus}$ shows that a lower bound of the sinus
terms is $|\ell|^{ - \Newcst{kappa19}}$ (and an obvious upper
bound is $1$!). Moreover,

\noindent -- The $T_1$ term contains a constant factor and the
factors:

$\bullet$ $|\xi_1|$ with $k^{-\cst{kappaxi}}\leq|\xi_1|\leq k^{\cst{kappaxi}}$,

$\bullet$
 $\epsilon^{\ell-(n/2)}$ (which is the main term),

 $\bullet$
 a sinus with a parameter $n$ (a
 lower bound of the absolute value of that sinus being $n^{-\Newcst{kappasinn}}$).

\noindent -- Similarly, $T_2$ contains a constant factor and the
factors:

$\bullet$ $|\xiprime_1|$, with $k^{-\cst{kappaxi}}\leq |\xiprime_1|\leq
k^{\cst{kappaxi}}$,

$\bullet$
 $\epsilon^{n-(\ell/2)}$ (which the main term),

 $\bullet$
 a sinus with a parameter $\ell $ (a
 lower bound of the absolute value of that sinus being $|\ell|^{-\Newcst{kappasinell}}$).

\noindent -- Similarly, $T_3$ contains a constant factor and
the factors:

$\bullet$ $|\xiprime_1|$, with $k^{-\cst{kappaxi}}\leq |\xiprime_1|\leq k^{\cst{kappaxi}}$,

$\bullet$
 $\epsilon^{-(n+\ell)/2}$ (which the main term),

 $\bullet$
 a sinus with a parameter $\ell-n$ (a
 lower bound of the absolute value of that sinus being
 $|\ell -n|^{-\Newcst{kappasinellmoinsn}}$).

%Since an upper bound of $n$ is a function of $|\ell|$ because of
%%($\ref{Equation:majnellnegatif}$) si $\ell <0$ et
%%($\ref{Equation:ellPositifMajn}$) si $\ell \ge 0$,
%($\ref{Equation:majoration-n}$), the values of the sinus terms are
%between $|\ell|^{-\Newcst{kappasin}}$ and $1$

We will consider three cases, and we will use the inequalities
%:
%$$\house{\xi_1}\leq k^{\cst{kappaxi}},
%\;\; |\xiprime_1|>k^{-\cst{kappaxi}},\;\; n \le \frac{2}{3}|\ell|+
%\cst{kappa15} \log k \quad\hbox{and}\quad |\ell-n|\ge \frac{1}{3}
%|\ell| - \cst{kappa14} \log k.
%$$
%The last two inequalities allow to conclude that there is an
%effective upper bound for $|\ell|$ and $n$.
($\ref{Equation:xiun}$) and ($\ref{Equation:majoration-n}$).
This will eventually allow us to conclude that there is an
upper bound for $|\ell|$ and $n$ by an effective constant times $\log k$.
\medskip

\indent {\bf First case}. If the two terms $a$ and $b$ with the
largest absolute values are $T_1$ and $T_2$,
%ceux faisant intervenir $\epsilon^{\ell-n/2}$ et $\epsilon^{n-\ell/2}$,
from the inequalities $|T_1|\le 2|T_2|$ and $|T_2|\le 2|T_1|$
(which come from $|b|\le |a|\le 2|b|$),
 we deduce (thanks to ($\ref{Equation:majoration-n}$))
$$
k^{-\Newcst{kappapremiercas1}} |\ell|^{-\Newcst{kappa20}}\; \le\;
\epsilon^{\frac{3}{2}(\ell-n)}\le k^{\Newcst{kappapremiercas2}}
|\ell |^{\Newcst{kappa21}},
$$
whereupon, thanks again to ($\ref{Equation:majoration-n}$), we have
$$
-\Newcst{kappa22} \log k + \frac{|\ell |}{3}\;\le \;|\ell-n|\; \le\;
\Newcst{kappa23}\log|\ell | + \Newcst{kappa24} \log k,
$$
which leads to $|\ell| \le \Newcst{kappa25} (\log k+ \log |\ell|)$.
 This secures the upper bound ($\ref{Equation:nellmajore}$), and ends the
 proof of Theorem $\ref{Theoreme:principal}$.
 \medskip

 \indent
{\bf Second case}. Suppose that the two terms $a$ and $b$ with
the largest absolute values are $T_1$ and $T_3$. By writing\,\,
$|T_1|\le 2|T_3|$ \,\,and \,\, $|T_3|\le 2|T_1|$,\, we obtain
(thanks to ($\ref{Equation:majoration-n}$))
$$
k^{-1/3} |\ell|^{- \Newcst{kappa27}} \;\le\;
 \epsilon^{3\ell/2}
\; \le\;
 k^{1/3} |\ell |^{ \Newcst{kappa28}},
$$
hence
$$
 |\ell|\;\le\; \Newcst{kappa29} ( \log k +\log|\ell |).
$$
Once more, we have $\epsilon^{|\ell|}\le k^{ \Newcst{kappa26}}$,
and we saw that the upper bound
 ($\ref{Equation:majoration-n}$) allows to draw the conclusion.
\medskip

 \indent
{\bf Third case}. Let us consider the remaining case, namely,
the two terms $a$ and $b$ with the largest absolute values being
$T_2$ and $T_3$.
 Consequently, in the relation $T_1+T_2+T_3\;=\;0$, written in
 the form $a+b+c\;=\;0$ with $|a|\ge |b|\ge |c|$, we have $c\;=\;T_1$.
 %ceux faisant intervenir $\epsilon^{n-(\ell/2)}$ et $\epsilon^{-(\ell+n)/2}$,
Writing $|T_2|\le 2|T_3|$ and $|T_3|\le 2|T_2|$, we obtain
$$
 k^{-1/3} |\ell|^{- \Newcst{kappa30}}\;\le\; \epsilon^{3n/2}\le k^{1/3} |\ell |^{ \Newcst{kappa31}}.
$$
From the second of these inequalities, we deduce the existence of $ \Newcst{kappa32}$ such that
\begin{equation}\label{Equation:Majoration-n}
n \;\le\; \cst{kappa32}(\log k + \log|\ell | ).
\end{equation}

\indent {\tt Remark.} {\rm The upper bound
($\ref{Equation:Majoration-n}$) allows to proceed as in the usual
proof of the Thue theorem where $n$ is fixed.}
\medskip

From the upper bound \,$|T_1|\le |T_2|$,\, one
deduces \,$n>\ell- \Newcst{kappa33}\log k$, \,so that
($\ref{Equation:Majoration-n}$) leads right away to the conclusion
if $\ell$ is positive.

Let us suppose now that $\ell$ is negative. Let us consider again
the equation ($\ref{Equation:EquationAuxUnites}$) that we write in
the form
\begin{equation}\label{Equation:Unites} %(17)
\rho_n \epsilon^\ell + \mu_n \epsilonprime^\ell -\mubar_n
\epsilonprimebar^\ell\;=\;0
\end{equation}
with
$$
\rho_n\;=\; \xi_1 (\alphaprime \epsilonprime^n-\alphaprimebar \;
\epsilonprimebar^n) \quad \hbox{and}\quad \mu_n\;=\; \xiprime_1
(\alphaprimebar \; \epsilonprimebar^n-\alpha\epsilon^n).
$$
We check (cf. Property 3.3 of \cite{GL326})
$$
\rmh(\mu_n)\le \Newcst{kappa34} (n +\log k).
$$
Let us divide each side of ($\ref{Equation:Unites}$) by $-\mu_n
\epsilonprime^\ell$:
$$
\frac{\mubar_n \epsilonprimebar^\ell}{\mu_n
\epsilonprime^\ell}-1\;=\;\frac{\rho_n \epsilon^\ell}{\mu_n
\epsilonprime^\ell}\cdotp
$$
%With the help of ($\ref{Equation:nGrand}$), w
We have
$$
|\alpha' \epsilonprime^n-\alphaprimebar\,\,
\epsilonprimebar^n|\;\leq\;
 \, |\alpha' \epsilonprime^n|+ | \alphaprimebar\,\,
\epsilonprimebar^n |
\, =
\, 
 %\frac{3}{2}
 2\left| \epsilonprime^n \alphaprime\right|
$$
and, using ($\ref{Equation:nGrand}$),
$$
|\alphaprimebar\,\, \epsilonprimebar^n -\alpha \epsilon^n|
\;\geq\; \frac{1}{2}|\alpha|\epsilon^n.
$$
Since
$$\house{\xi_1}\;\le k^{\cst{kappaxi}}
%; \cst{kappa6} {\root 3 \of k}
\; \mbox{ and } \;|\xiprime_1|\;>\;k^{-\cst{kappaxi}}
$$
by ($\ref{Equation:xiun}$), we come up with
$$
|\rho_n|\;\le\; \Newcst{kappa35} k^{\cst{kappaxi}} \epsilon^{ n /2},\quad
|\mu_n|\;\ge\; \Newcst{kappa36}
 \epsilon^{n} k^{-\cst{kappaxi}}.
$$
Therefore, since $|\epsilon'|^{-1}\;=\;\epsilon^{1/2}$, we have
 \begin{equation}\label{Equation:majorationFormeLineaire} %(18)
\left| \frac{\mubar_n \epsilonprimebar^\ell}{\mu_n
\epsilonprime^\ell}-1\right|\;=\; \left| \frac{\rho_n
\epsilon^\ell}{\mu_n \epsilonprime^\ell}\right|\;\leq\;
\Newcst{kappa37}
 \epsilon^{-(n+3|\ell|)/2}
 %n^{\Newcst{kappa37}}
 k^{\cst{kappaxi}}.
\end{equation}
\smallskip
We denote by $\log$ the principal value of the logarithm and we set 
 $$
\lambda_1\;=\; \log \left(\frac{\epsilonprimebar}{\epsilonprime}\right),
\quad
\lambda_2\;=\;
 \log \left(\frac{\mubar_n }{\mu_n}\right)
\quad\hbox{and}
\quad
\Lambda\;=\;\log \left(
 \frac{\mubar_n \epsilonprimebar^\ell}{\mu_n
\epsilonprime^\ell}
\right).
$$
We have
$$
\lambda_1\;=\;2i\pi \nu \quad \lambda_2\;=\;2i\pi\theta_n,
$$
where $\nu$ and $\theta_n$ are the real numbers in the interval %(
 $[0,1)$ defined by
$$
\frac{\epsilonprimebar}{\epsilonprime}\;=\;e^{2i\pi \nu}
\quad
\hbox{and}\quad 
\frac{\mubar_n}{\mu_n}\;=\;e^{2i\pi \theta_n}.
$$
From $ e^\Lambda\;=\;e^{\ell\lambda_1+\lambda_2}$ we deduce $\Lambda- \ell \lambda_1-\lambda_2\;=\;2i\pi h$ with $h\in \Z$. From Lemma $\ref{lemme:GL326exo11b}b$ we deduce 
$|\Lambda|\le 2 |e^\Lambda-1|$. Using $|\Lambda|<2\pi$ and writing 
$$
2i\pi h\;=\;\Lambda-2i\pi\ell\nu-2i\pi \theta_n,
$$
we deduce $|h|\le |\ell|+2$. 

 In Proposition
$\ref{Proposition:FormeLineaireLogarithmes1}$, let us take 
%$b_0\;=\;h$, $b_1\;=\;\ell$, $b_2\;=\;1$, $\lambda_0\;=\;2i\pi$,
$$
b_0\;=\;h, \quad b_1\;=\;\ell, \quad b_2\;=\;1, \quad \gamma_0=1,\quad \lambda_0\;=\;2i\pi, 
\qquad 
\gamma_1\;=\;\frac{\epsilonprimebar}{\epsilonprime}, \quad 
\gamma_2\;=\;\frac{\mubar_n}{\mu_n},
$$
%$\lambda_1\;=\;2i\pi\theta_n$, $\lambda_2\;=\;2i\pi\nu$
$$
A_0\; = \; A_1\;=\;\Newcst{38}, \quad
A_2\;=\;(k \, \, \epsilon^n)^{\Newcst{kappa51} }, \quad
B\;=\; e +\frac{ |\ell | }{\log A_2}\;\cdotp
 $$
 Notice that the degree $D$ of the field $\Q(\gamma_0, \gamma_1, \gamma_2)$ is $\le 6$. 
Then we obtain
$$
\left| \frac{\mubar_n }{\mu_n} \left( \frac{\epsilonprimebar}{
\epsilonprime}\right)^\ell -1\right|\;=\; |e^\Lambda-1|\; \ge\;
\frac{1}{2} |\Lambda| \;\ge \; \exp\bigl\{ - \Newcst{kappa41} (\log
A_2)(\log B)\big\}.
$$
By combining this estimate with
($\ref{Equation:majorationFormeLineaire}$), we deduce
$$
|\ell |\;\le \;\Newcst{kappa46} (n+\log k )\log B,
$$
which can also be written as $B\le \Newcst{kappa47} \log B$, hence
$B$ is bounded. This allows to obtain
 $$
 | \ell | \;\le\; \Newcst{kappa48} (n+ \log k ).
 $$
 We use ($\ref{Equation:Majoration-n}$) to deduce $\epsilon^{|\ell|}\le k^{ \cst{kappa26}}$
 and we saw that the upper bound ($\ref{Equation:nellmajore}$)
 leads to the conclusion of the main Theorem
 $\ref{Theoreme:principal}$.

\vfill

\end{document}